\newtheorem{theorem}{Theorem}
\newtheorem*{theorem*}{Theorem}
\newtheorem{lemma}{Lemma}
\newtheorem{proposition}{Proposition}
\theoremstyle{definition}
\newtheorem{definition}{\sc Definition}
\newtheorem*{definition*}{\bf Definition}
\newtheorem{example}{\bf Example}
\newtheorem{remark}{Remark}
\newtheorem*{remark*}{Remark}
\newtheorem*{example*}{\bf Example}
\newcommand{\loc}{{\rm loc}}
\begin{document}

\title{Sharp solvability for singular SDEs}

\author{Damir Kinzebulatov} 

\address{D\'{e}partement de math\'{e}matiques et de statistique, Universit\'{e} Laval, Qu\'{e}bec, QC, G1V 0A6, Canada}

\email{damir.kinzebulatov@mat.ulaval.ca}

\author{Yuliy A.\,Sem\"{e}nov}

\address{University of Toronto, Department of Mathematics, Toronto, ON, M5S 2E4, Canada}

\email{semenov.yu.a@gmail.com}

\thanks{The research of D.K.\,is supported by the NSERC (grant RGPIN-2017-05567)}

\subjclass[2010]{60H10, 47D07 (primary), 35J75 (secondary)}

\keywords{Parabolic equations, stochastic equations, singular drifts}

\begin{abstract}
The attracting inverse-square drift provides a prototypical counterexample to solvability of singular SDEs: 
if the coefficient of the drift is larger than a certain critical value, then no weak solution exists. 
We prove a positive result on solvability of singular SDEs where this critical value is attained from below (up to strict inequality) for the entire class of form-bounded drifts. This class contains e.g.\,the inverse-square drift, the critical Ladyzhenskaya-Prodi-Serrin class. The proof is based on a $L^p$ variant of De Giorgi's method.
\end{abstract}

\maketitle

\section{Introduction and main result}

The paper addresses the question: what are the minimal assumptions on a locally unbounded vector field $b:[0,\infty[ \times \mathbb R^d \rightarrow \mathbb R^d$, $d \geq 3$, also called a drift, such that the stochastic differential equation (SDE)
\begin{equation}
\label{sde}
dX_t=-b(t,X_t)dt + \sqrt{2}dB_t, \quad X_0=x \in \mathbb R^d
\end{equation}
admits a martingale (or weak) solution? Here $B_t$ is a standard Brownian motion in $\mathbb R^d$.
There is an extensive literature devoted to the search for such minimal assumptions, as well as to the question of what additional hypothesis on $b$ is required in order to ensure the uniqueness of the solution. The interest is motivated, in particular, by physical applications and applications to the theory of stochastic optimal control.

It is known that if $b$ is in the Ladyzhenskaya-Prodi-Serrin class
\begin{equation}
\label{LPS}
\tag{LPS}
|b| \in L_{\loc}^q([0,\infty[,L^r+L^\infty), \quad \frac{d}{r}+\frac{2}{q} < 1, \quad 2<q \leq \infty
\end{equation}
then equation \eqref{sde} has a unique in law martingale solution, see Portenko \cite{P}; the strong existence and uniqueness is due to Krylov-R\"{o}ckner \cite{KR}. In 2014, Beck-Flandoli-Gubinelli-Maurelli \cite{BFGM}, extending a method in \cite{CE}, gave the following counterexample to weak solvability of \eqref{sde} (among many results on the strong existence and uniqueness for \eqref{sde} with $b$ in the critical Ladyzhenskaya-Prodi-Serrin class
\begin{equation}
\label{LPS_c}
\tag{${\rm LPS}_c$}
|b| \in L_{\loc}^q([0,\infty[,L^r+L^\infty), \quad \frac{d}{r}+\frac{2}{q} \leq 1, \quad 2 < q \leq \infty).
\end{equation}

\begin{example}
\label{ex}
Consider the inverse-square drift $b(x)=\sqrt{\delta} \frac{d-2}{2}|x|^{-2}x$ ($d \geq 3$). 

The following is true:

(a) If $\delta>4(\frac{d}{d-2})^2$, then equation \eqref{sde} with the initial point $x=0$ has no weak solution. 

(b) If $\delta>4$, then for every $x \neq 0$ the solution to \eqref{sde} arrives at the origin  in finite time with positive probability.
\end{example}

The vector field in Example \ref{ex} has a stronger singularity than any vector field in (${\rm LPS}_c$).
Intuitively, when $\delta>4$, the attraction to the origin is so strong that the process, even  starting at $x \neq 0$, does not look like a Brownian motion. See also  \cite{W} regarding (b).

In fact, it has been known for a long time that the value $\delta=4$ is critical, although in a different context: the theory of operator $-\Delta + b \cdot \nabla$. 
More precisely, let $b$ be a form-bounded vector field, i.e.
$$
|b|^2 \leq \delta (-\Delta) + g_\delta \quad \text{ in the sense of quadratic forms (see below)}
$$
(e.g.\,$b$ in Example \ref{ex} is form-bounded). It was proved in \cite{KS, S} that if the form-bound $\delta$ satisfies $\delta<4$, then there exists a quasi contraction strongly continuous Markov evolution family in $L^p$, $p > \frac{2}{2-\sqrt{\delta}}$ that delivers a unique weak solution to Cauchy problem 
\begin{equation}
\label{par}
(\partial_t-\Delta + b \cdot \nabla)u=0, \quad u(0)=f \in L^p
\end{equation}
  (a strong solution if $b=b(x)$). Here one expects, of course, in the time-homogeneous case, $$u(t,x)=\mathbf E[f(X_t)].$$ The interval of contraction solvability can be closed to $[\frac{2}{2-\sqrt{\delta}},\infty[$ and is sharp, see \cite{KiS}. 
Now, as $\delta \uparrow 4$, this interval disappears, and with it the theory of the operator $-\Delta + b \cdot \nabla$.

\begin{definition}
\label{def1}
A vector field $b:[0,\infty[ \times \mathbb R^d \rightarrow \mathbb R^d$ is said to be form-bounded  if $|b| \in L^2_{\loc}([0,\infty[ \times \mathbb R^d)$ and there exist a constant $\delta>0$ and a function $0 \leq g_\delta \in L^1_{\loc}([0,\infty[)$ such that
\begin{align*}
 \int_0^\infty \|b(t,\cdot)\xi(t,\cdot)\|_2^2 dt   \leq \delta \int_0^\infty\|\nabla \xi(t,\cdot)\|_2^2 dt+\int_0^\infty g_\delta(t)\|\xi(t,\cdot)\|_2^2dt
\end{align*}
for all $\xi \in C^\infty_c([0,\infty[ \times \mathbb R^d)$ (written as $b \in \mathbf{F}_\delta$). 
 
Here  $\|\cdot\|_p:=\|\cdot\|_{L^p(\mathbb R^d)}$.
\end{definition}

Examples of form-bounded vector fields include: 
$b \in ({\rm LPS}_c)$, 
the vector field in Example \ref{ex} or, more generally, vector fields $b=b(x)$ with $|b|$ in the weak $L^d$ class or the Campanato-Morrey class. One can construct, for every $\varepsilon>0$, a form-bounded $b=b(x)$ with $|b| \not\in L^{2+\varepsilon}$. See \cite{KiS,KM} for details and other examples.

The present paper deals with the SDE \eqref{sde} with a form-bounded vector field $b \in \mathbf{F}_\delta$. In \cite{KM} it was proved that if the form-bound $\delta$ satisfies $\delta<d^{-2}$, then, for every $x \in \mathbb R^d$, \eqref{sde} has a weak solution that is unique in a large class, and is given by a Feller evolution family. In the time-homogeneous case $b=b(x)$ the result is stronger, that is, $b$ is required to be form-bounded with $\delta<\big(2(d-2)^{-1} \wedge 1\big)^2$ \cite{KiS2}, or even only weakly form-bounded \cite{KiS3}, which allows to treat vector fields $b$ that are a priori only in $L^1_{\loc}$. In both cases the solutions are determined by a Feller semigroup, and are unique among weak solutions that are constructed using an approximation of $b$ by smooth vector fields that do not increase the form-bound $\delta$ of $b$. 

See also Krylov \cite{Kr} regarding Markov weak solvability of \eqref{sde} for $|b| \in L_{\loc}^q([0,\infty[,L^r+L^\infty)$, $\frac{d}{r}+\frac{1}{q} \leq 1$, $r \geq d$, $q \geq 1$ (in \cite{Kr} the SDE can in fact have just measurable diffusion coefficients).

In \cite{KiS3, KiS2,KM}, the construction of the Feller evolution family (semigroup) and the weak solution to \eqref{sde} is based on quite strong gradient estimates on solution $u$ to the parabolic  equation \eqref{par} (solution $v$ to the elliptic equation  $(\lambda-\Delta + b\cdot \nabla) v=f$) with $b \in \mathbf{F}_\delta$. In \cite{KM} and \cite{KiS2} these estimates are, respectively,
$$\|\nabla u\|_{L^\infty([0,T],L^q)},\|\nabla |\nabla u|^{\frac{q}{2}}\|^{\frac{2}{q}}_{L^2([0,T],L^2)} \leq C \|\nabla f\|_q \quad \text{ for } q \in ]d,k(\delta)[ \quad \text{ if } \delta<d^{-2}
$$
and
$$
\|\nabla v\|_{L^q},  \|\nabla |\nabla v|^{\frac{q}{2}}\|^{\frac{2}{q}}_{L^2} \leq C' \|f\|_q \quad \text{ for } q \in ]2 \vee (d-2),k'(\delta)[ \quad \text{ if } \delta<\big(2(d-2)^{-1} \wedge 1\big)^2.
$$ 
Extending these estimates to vector fields $b$ whose form-bound $\delta$ surpasses $\big(2(d-2)^{-1} \wedge 1\big)^2$ (not to mention $\delta$ going up to $4$) is  problematic if not impossible. Thus, there is a gap between the hypothesis on $\delta$ in \cite{KiS3,KiS2,KM} and in Example \ref{ex}. 
The purpose of this paper is to fill this gap.

\begin{theorem}
\label{thm1}
Let $d \geq 3$, $b \in \mathbf{F}_\delta$. If $\delta<4$, then for every $x \in \mathbb R^d$ there exists a martingale solution to \eqref{sde}.
\end{theorem}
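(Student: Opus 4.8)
The plan is to construct the martingale solution by approximation: replace $b$ by a sequence of smooth, bounded vector fields $b_n$ (e.g.\ $b_n = E_{1/n} b \mathbf{1}_{|b|\le n}$, where $E_\varepsilon$ is a Friedrichs mollifier) chosen so that $b_n \in \mathbf{F}_\delta$ with a common function $g_\delta$ and the same $\delta<4$; for each $b_n$ the SDE \eqref{sde} has a unique strong solution $X^n$, inducing a probability measure $\mathbb P_n$ on path space $C([0,\infty[,\mathbb R^d)$. The goal is then to show $(\mathbb P_n)$ is tight, extract a weakly convergent subsequence $\mathbb P_n \to \mathbb P$, and verify that under $\mathbb P$ the canonical process solves the martingale problem for $-\Delta + b\cdot\nabla$, i.e.\ that $f(\omega_t) - f(\omega_0) - \int_0^t (\Delta f - b\cdot\nabla f)(s,\omega_s)\,ds$ is a $\mathbb P$-martingale for $f \in C_c^\infty$.

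The crucial analytic input, where the threshold $\delta<4$ enters and where the $L^p$ De Giorgi method is used, is an a priori estimate on the one-dimensional transition densities (or on time-integrals of $|b_n|$ along the paths) uniform in $n$. Concretely, I would aim to prove a bound of the form
\begin{equation*}
\mathbb E_{\mathbb P_n}\!\left[\int_s^t |b_n(r,X_r)|^2\,dr \;\Big|\; X_s = y\right] \le C\,(t-s)^{\gamma} + C\int_s^t g_\delta(r)\,dr
\end{equation*}
for some $\gamma>0$ and $C$ independent of $n$, $y$. This follows if one has heat-kernel-type control $0 \le p_n(s,y;t,x) \le C(t-s)^{-d/2}\exp(-c|x-y|^2/(t-s))$, or at least an $L^p$-in-$x$ bound on $p_n$, obtained by running the De Giorgi iteration on the parabolic equation $(\partial_t - \Delta + b_n\cdot\nabla)u = 0$: the form-boundedness $|b_n|^2 \le \delta(-\Delta) + g_\delta$ feeds the energy inequality, and $\delta<4$ is precisely what keeps the relevant constant (after completing the square / choosing the test function $u^p$ or a power truncation) positive, so that the iteration closes. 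From such a density estimate, tightness of $(\mathbb P_n)$ follows by the Kolmogorov–Chentsov criterion applied to moments of increments $\mathbb E_{\mathbb P_n}|X_t - X_s|^{2m}$, which are controlled via the Brownian part plus $\int_s^t |b_n|$ estimated through the density bound and the $\mathbf{F}_\delta$ hypothesis.

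For the passage to the limit in the martingale problem, the only nontrivial term is $\int_0^t b_n(s,\omega_s)\cdot\nabla f(\omega_s)\,ds$. I would show that $b_n \to b$ in $L^2_{\loc}$ (or in a suitable $L^p_{\loc}$), combine this with the uniform-in-$n$ density bound to get uniform integrability of $\int_0^t |b_n(s,\omega_s)|\,ds$ under $\mathbb P_n$, and use a standard argument (e.g.\ first replace $b_n$ by a fixed continuous $\bar b$ close in $L^2_{\loc}$, pass to the limit in $n$ for the continuous coefficient, then remove $\bar b$) to identify the limit, so that $\mathbb P$ solves the martingale problem; the limit process then yields a weak solution of \eqref{sde} by the standard equivalence between martingale and weak solutions.

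The main obstacle I anticipate is the a priori density/heat-kernel estimate uniform in the approximation: one must run the De Giorgi scheme on $(\partial_t - \Delta + b_n\cdot\nabla)u = 0$ in an $L^p$ (rather than $L^2$) framework — the drift is not in the Kato class nor small, only form-bounded — and track all constants so that the borderline case $\delta \uparrow 4$ is reached up to strict inequality. In particular the choice of the exponent $p$ and of the truncation powers, the Nash/Sobolev inequality input, and the bookkeeping of the $g_\delta$ terms over time intervals must be done so that nothing blows up as $\delta \to 4$; getting the correct dependence of constants on $\delta$ here is the technical heart of the argument, and everything else (tightness, martingale-problem limit) is comparatively routine.
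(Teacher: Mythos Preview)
Your overall architecture---approximate $b$ by smooth $b_n\in\mathbf{F}_\delta$ with the same $\delta$, obtain uniform path estimates, prove tightness, pass to the limit in the martingale problem---matches the paper exactly, and you correctly identify that the analytic core must be a De Giorgi iteration run in $L^p$ with $p>p_\delta=\tfrac{2}{2-\sqrt{\delta}}$, which is what makes $\delta<4$ (rather than $\delta<1$) the threshold.

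The gap is in \emph{what object} you propose to apply De Giorgi to. You aim for a pointwise or $L^p$ bound on the transition density $p_n(s,y;t,x)$ and then feed that into $\mathbb E\int |b_n|^2$. This does not work under the hypotheses of the theorem: as a function of $(t,x)$ the density solves the \emph{forward} equation $(\partial_t-\Delta-{\rm div}(b_n\,\cdot))p_n=0$, so any estimate on it drags in ${\rm div}\,b$, on which nothing is assumed; as a function of $(s,y)$ it solves the backward equation with Dirac terminal data, and De Giorgi's local $L^p\to L^\infty$ bound is useless when the $L^p$ norm is infinite at the terminal time. The paper itself remarks that Gaussian heat-kernel bounds are only available under an additional Kato condition on ${\rm div}\,b$.

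What the paper does instead is bypass the density entirely. It fixes the \emph{inhomogeneous} terminal-value problem
\[
\partial_t u_n+\Delta u_n - b_n\cdot\nabla u_n + |\mathsf h|f=0,\qquad u_n(t_1)=0,
\]
so that by It\^o's formula $\mathbb E\int_{t_0}^{t_1}|\mathsf h|f(r,X^n_r)\,dr=u_n(t_0,X^n_{t_0})$, and then proves an a priori $L^\infty$ bound on $u_n$ (Proposition~\ref{mp}/Proposition~\ref{cor1}) by De Giorgi iteration in $L^p$, $p>p_\delta$. This yields directly the Krylov-type estimate
\[
\Bigl|\mathbb E^n_x\int_{t_0}^{t_1}|\mathsf h(s,\omega_s)|f(s,\omega_s)\,ds\Bigr|\le C\sup_{z}\Bigl(\int_{t_0}^{t_1}\big\langle(\ldots)|f|^{p\theta'}\rho_z^2\big\rangle\Bigr)^{1/p\theta'}.
\]
Taking $\mathsf h=b_n$, $f\equiv 1$ gives $\mathbb E^n_x\int_{t_0}^{t_1}|b_n|\le C(t_1-t_0)^\mu$ and hence tightness; taking $\mathsf h=b_{m_1}-b_{m_2}$, $f=|\nabla\varphi|$ gives the convergence needed to pass to the limit in the drift term of the martingale problem. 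So the right move is to run De Giorgi on the solution of the inhomogeneous equation with source $|\mathsf h|f$, not on the fundamental solution of the homogeneous one.
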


Theorem \ref{thm1} shows that Example \ref{ex} is essentially sharp, at least as $d \rightarrow \infty$.
A crucial feature of Theorem \ref{thm1} is that it attains $\delta=4$ (up to strict inequality) for the entire class of form-bounded vector fields.

We leave aside the important issues of the Feller property/uniqueness of the constructed martingale solution. Let us only mention that if one is willing to impose additional assumptions on ${\rm div}\,b$ (namely, the Kato class condition), then Nash's method allows to obtain two-sided Gaussian bounds on the fundamental solution to \eqref{par}, see \cite{KiS4}, from which the Feller property follows.

We prove the main analytic result (Proposition \ref{mp} below) using De Giorgi's iterations. They are carried out in $L^p$, $p>\frac{2}{2-\sqrt{\delta}}$, $p \geq 2$ rather than in the standard for the De Giorgi method $L^2$ space, as is needed to handle $1 \leq \delta<4$. In this regard, let us make a trivial observation that passing to $L^p$ right away, using the fact that $u^\frac{p}{2}$ is a subsolution and then applying to $u^\frac{p}{2}$ the standard  De Giorgi iteration procedure in $L^2$, does not allow to treat $1 \leq \delta<4$. We will have to 
follow the iteration procedure from the very beginning and adjust it accordingly.

Earlier, De Giorgi's method in $L^2$ was used by Zhang-Zhao \cite{ZZ}, Zhao \cite{Zh}, R\"{o}ckner-Zhao \cite{RZ}. They prove, in particular, results on weak well-posedness of \eqref{sde} with a zero-divergence $b$ satisfying $$|b| \in L_{\loc}^q([0,\infty[,L^r+L^\infty), \quad \frac{d}{r}+\frac{2}{q} < 2.$$ Similarly to these papers, we apply a tightness argument to construct a martingale solution once Proposition \ref{mp} is established, see Section \ref{sde_subsect}. We also refer to Hara \cite{Ha} for the proof of H\"{o}lder continuity  of solutions to elliptic equations with $b \in \mathbf{F}_\delta$, $\delta<1$ using Moser's method in $L^2$.

Finally, we note that passing to the $L^p$ variant of De Giorgi's method does not exclude other singular drift perturbations known to be  amenable in $L^2$ (cf.\,\cite{Ha,Z}). For instance, the assertion of Theorem \ref{thm1} is also valid for $b=b_1+b_2$, where $b_1 \in \mathbf{F}_{\delta_1}$, $\delta_1<4$ and $b_2$ satisfies:

1) there exists $0<a \leq 1$ such that
$|b_2| \in L^{1+a}_{\loc}([0,\infty[ \times \mathbb R^d)$ and 
\begin{align}
\label{fb2}
 \int_0^\infty \langle |b_2(t,\cdot)|^{1+a}\xi^2(t,\cdot)\rangle dt   \leq \delta_2 \int_0^\infty\|\nabla \xi(t,\cdot)\|_2^2 dt+\int_0^\infty g_\delta(t)\|\xi(t,\cdot)\|_2^2dt
\end{align}
for all $\xi \in C^\infty_c([0,\infty[ \times \mathbb R^d)$, for some $0<\delta_2<\infty$ and $0 \leq g_{\delta_2} \in L^1_{\loc}([0,\infty[)$.  Here and everywhere below,
$$
\langle f,g\rangle = \langle f g\rangle :=\int_{\mathbb R^d}f gdx.
$$

2) the divergence $({\rm div}\,b_2)_+ \in L^1_{\loc}([0,\infty[ \times \mathbb R^d)$ and
\begin{equation}
\label{fb3}
\int_0^\infty \big\langle ({\rm div}\,b_2)_+(t,\cdot)\xi^2(t,\cdot)\big\rangle dt   \leq \nu \int_0^\infty\|\nabla \xi(t,\cdot)\|_2^2 dt+\int_0^\infty g_{\nu}(t)\|\xi(t,\cdot)\|_2^2dt
\end{equation}
with $\nu<4-2\sqrt{\delta_1}$ for some $0 \leq g_{\nu} \in L^1_{\loc}([0,\infty[)$. We say that $({\rm div}\,b_2)_+$ is a form-bounded potential (for instance, it can be a function in the weak $L^{\frac{d}{2}}$ space).
For details, see Remark \ref{div_rem} below. There we explain that $\nu<4-2\sqrt{\delta_1}$ suffices, provided that we prove the energy inequality in $L^p$ for $p$ such that $\nu<4\frac{p-1}{p}-2\sqrt{\delta_1}$. If we stay in $L^2$, then we have to impose more restrictive condition $\nu<2-2\sqrt{\delta_1}$. 

The class \eqref{fb2} is essentially twice more singular than $\mathbf{F}_\delta$.
It first appeared in Q.\,S.\,Zhang \cite{Z}, where the author used Moser's method in $L^2$ to prove, assuming that the vector field has zero divergence and satisfies \eqref{fb2}, the local boundedness of weak solutions to the corresponding parabolic equation, and applied this result to study equations of Navier-Stokes in $\mathbb R^3$.

\bigskip

\section{Proof of Theorem \ref{thm1}}

\subsection{De Giorgi's iterations in $L^p$}

In the next two propositions, $u$ is the solution to Cauchy's problem for inhomogeneous Kolmogorov equation
\begin{equation}
\label{k_eq3}
(\partial_t  - \Delta  + b \cdot \nabla) u=|\mathsf{h}|f, \quad u(0)=0.
\end{equation} 
where
$$b \in \mathbf{F}_\delta \cap C^\infty_c(]0,\infty[ \times \mathbb R^d), \quad \delta<4,$$
$$\mathsf{h} \in \mathbf{F}_\nu \cap C^\infty_c(]0,\infty[ \times \mathbb R^d), \quad \nu<\infty \qquad \text{ and } f \in C_c.$$
Since the coefficients of \eqref{k_eq3} are smooth with compact support, the solution $u$ exists and is sufficiently regular to justify the manipulations with the equation below.

Set
$$
p_\delta:=\frac{2}{2-\sqrt{\delta}}.
$$

We will call a constant generic if it only depends on $d$, $p$, $\delta$, $\nu$ (and $T>0$, in case we work over a fixed finite time interval $[0,T]$).

\begin{proposition}[Energy inequality]
\label{c_prop}
Let $u$ be the solution to Cauchy problem \eqref{k_eq3}.
Let $p > p_\delta$, $p \geq 2$. Set $u_c:=(u-c)_+$, $c \in \mathbb R$. Fix $T>0$ and $\eta \in C_c^\infty(\mathbb R^d)$. Then,  $0 < t-s \leq T$,
\begin{align}
\sup_{\vartheta \in [s,t]}\langle u_c^p(\vartheta) \eta^2\rangle &   + \int_s^t \langle |\nabla (\eta u_c^{\frac{p}{2}})|^2 \rangle\label{c_ineq}  \\
& \leq C_1\langle u_c^p(s) \eta^2\rangle  + C_2\int_s^t \langle u_c^p|\nabla\eta|^2\rangle + C_3  \int_s^t \big\langle \bigl(\mathbf{1}_{\{|\mathsf{h}| \geq 1\}} + \mathbf{1}_{\{|\mathsf{h}| < 1\}}|\mathsf{h}|^p\bigr)\mathbf{1}_{\{u>c\}}|f|^p \eta^2 \big\rangle \notag
\end{align}
for generic constants $C_1$-$C_3>0$.
\end{proposition}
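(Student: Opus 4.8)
The plan is to run the standard $L^p$ energy estimate. First I would test the equation in \eqref{k_eq3} against the nonnegative function $p u_c^{p-1}\eta^2$ and integrate over $\mathbb R^d\times[s,\vartheta]$, $\vartheta\in[s,t]$. Since $(u-c)_+$ is only Lipschitz, I would first replace $u_c$ by a smooth convex approximant $\beta_\varepsilon(u-c)$ and smooth the time cutoff, perform the computation, and pass to the limit at the end; this is harmless because the coefficients, hence $u$, are smooth and $u$ is bounded on the (compact) support of $\eta$. The $\partial_t$-term yields $\langle u_c^p(\vartheta)\eta^2\rangle-\langle u_c^p(s)\eta^2\rangle$. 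Integrating the $-\Delta$-term by parts and using $p u_c^{p-1}\nabla u_c=\nabla u_c^p=2u_c^{p/2}\nabla u_c^{p/2}$ and $u_c^{p-2}|\nabla u_c|^2=\tfrac{4}{p^2}|\nabla u_c^{p/2}|^2$ produces $\int_s^\vartheta\bigl(\tfrac{4(p-1)}{p}\langle\eta^2|\nabla u_c^{p/2}|^2\rangle+2\langle\eta\nabla\eta\cdot\nabla u_c^{p}\rangle\bigr)$, whose cross term equals $4\langle\eta u_c^{p/2}\nabla\eta\cdot\nabla u_c^{p/2}\rangle$ and is $\le\epsilon\langle\eta^2|\nabla u_c^{p/2}|^2\rangle+C_\epsilon\langle u_c^p|\nabla\eta|^2\rangle$ by Young's inequality. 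The drift and source terms I move to the right.

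For the drift term $D:=\int_s^\vartheta\langle (b\cdot\nabla u)\,pu_c^{p-1}\eta^2\rangle$ I use that on $\{u>c\}$ one has $|b\cdot\nabla u|\,pu_c^{p-1}\le|b|\,|\nabla u_c|\,pu_c^{p-1}=2|b|u_c^{p/2}|\nabla u_c^{p/2}|$, whence $|D|\le 2\int_s^\vartheta\langle(|b|\eta u_c^{p/2})(\eta|\nabla u_c^{p/2}|)\rangle$. Young's inequality in time with a parameter $\beta>0$, followed by the time-integrated form-boundedness of $b$ applied with $\xi=\mathbf 1_{[s,\vartheta]}\eta u_c^{p/2}$ (legitimate after the smoothing above), together with $\|\nabla(\eta u_c^{p/2})\|_2^2\le(1+\epsilon)\langle\eta^2|\nabla u_c^{p/2}|^2\rangle+C_\epsilon\langle u_c^p|\nabla\eta|^2\rangle$ and the optimal choice $\beta=\sqrt{\delta(1+\epsilon)}$, yield
\[
|D|\le 2\sqrt{\delta(1+\epsilon)}\int_s^\vartheta\langle\eta^2|\nabla u_c^{p/2}|^2\rangle+C\int_s^t\langle u_c^p|\nabla\eta|^2\rangle+C\int_s^\vartheta g_\delta(\tau)\langle u_c^p(\tau)\eta^2\rangle\,d\tau .
\]
The key algebraic fact is that $p>p_\delta=\tfrac{2}{2-\sqrt\delta}$ is equivalent to $\tfrac{4(p-1)}{p}>2\sqrt\delta$, so for $\epsilon$ small the coefficient $\tfrac{4(p-1)}{p}-\epsilon-2\sqrt{\delta(1+\epsilon)}$ in front of $\int_s^\vartheta\langle\eta^2|\nabla u_c^{p/2}|^2\rangle$ remains strictly positive once $D$ is absorbed into the Laplacian contribution. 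This is precisely why the estimate is carried out in $L^p$: in $L^2$ the reserve would be $1-\sqrt\delta$, forcing $\delta<1$, whereas $\tfrac{4(p-1)}{p}\uparrow4$ as $p\uparrow\infty$ and $p_\delta<\infty$ iff $\delta<4$, which is the hypothesis.

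For the source term $S:=\int_s^\vartheta\langle|\mathsf h|f\,pu_c^{p-1}\eta^2\rangle$ I split according to $\{|\mathsf h|<1\}$ and $\{|\mathsf h|\ge1\}$, recalling that $u_c^{p-1}$ vanishes off $\{u>c\}$. On $\{|\mathsf h|<1\}$, Young's inequality with exponents $p$ and $\tfrac{p}{p-1}$ gives $|\mathsf h||f|u_c^{p-1}\le\tfrac1p|\mathsf h|^p|f|^p+\tfrac{p-1}{p}u_c^p$, producing the term $C\int_s^t\langle\mathbf 1_{\{|\mathsf h|<1\}}|\mathsf h|^p\mathbf 1_{\{u>c\}}|f|^p\eta^2\rangle$ and a harmless $C\int_s^\vartheta\langle u_c^p\eta^2\rangle$. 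On $\{|\mathsf h|\ge1\}$ I split $u_c^{p-1}=u_c^{p/2}\cdot u_c^{p/2-1}$ (valid since $p\ge2$), pair $|\mathsf h|\eta u_c^{p/2}$ with $|f|\eta u_c^{p/2-1}\mathbf 1_{\{|\mathsf h|\ge1\}\cap\{u>c\}}$, apply Young in time with a free small parameter, then the form-boundedness of $\mathsf h$ on the first factor and Young with exponents $\tfrac p2,\tfrac p{p-2}$ on the second (so $f^2u_c^{p-2}\le\tfrac2p|f|^p+\tfrac{p-2}{p}u_c^p$); the Young-in-time parameter is free (its smallness only worsens the generic constant in front of $|f|^p$, not the gradient coefficient), so the resulting gradient contribution is an arbitrarily small $\epsilon'\int_s^\vartheta\langle\eta^2|\nabla u_c^{p/2}|^2\rangle$. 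The remainder of $S$ is $C\int_s^t\langle\mathbf 1_{\{|\mathsf h|\ge1\}}\mathbf 1_{\{u>c\}}|f|^p\eta^2\rangle$ plus terms controlled by $\int_s^\vartheta(1+g_\nu(\tau))\langle u_c^p(\tau)\eta^2\rangle\,d\tau$.

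Collecting everything and choosing $\epsilon,\epsilon'$ small enough that $c_1:=\tfrac{4(p-1)}{p}-2\sqrt\delta-(\text{errors})>0$, I obtain, for every $\vartheta\in[s,t]$, an inequality of the shape \eqref{c_ineq} but with $c_1\int_s^\vartheta\langle\eta^2|\nabla u_c^{p/2}|^2\rangle$ in place of the gradient term, a generic constant (to be absorbed) in front of $\langle u_c^p(s)\eta^2\rangle$, and an extra term $\int_s^\vartheta\kappa(\tau)\langle u_c^p(\tau)\eta^2\rangle\,d\tau$ on the right, where $\kappa:=C(1+g_\delta+g_\nu)$ is integrable over $[0,T]$ since $g_\delta,g_\nu\in L^1_{\loc}$. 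Dropping the nonnegative gradient term and invoking Gronwall's lemma bounds $\sup_{\vartheta\in[s,t]}\langle u_c^p(\vartheta)\eta^2\rangle$ by $e^{\int_0^T\kappa}$ times the first three right-hand members; feeding this back then controls $\int_s^t\langle\eta^2|\nabla u_c^{p/2}|^2\rangle$ as well, and $|\nabla(\eta u_c^{p/2})|^2\le2\eta^2|\nabla u_c^{p/2}|^2+2u_c^p|\nabla\eta|^2$ upgrades it to the gradient term in \eqref{c_ineq}, the extra $\langle u_c^p|\nabla\eta|^2\rangle$ being already present on the right. Since $e^{\int_0^T\kappa}$ depends only on $d,p,\delta,\nu,T$ and $\int_0^T(g_\delta+g_\nu)$, all constants remain generic. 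I expect the main obstacle to be not any individual step but the global bookkeeping: the Laplacian cross term, the $(1+\epsilon)$ loss in passing from $\nabla(\eta u_c^{p/2})$ to $\eta\nabla u_c^{p/2}$ inside the form-bound, and the gradient residue of the $\{|\mathsf h|\ge1\}$ part of $S$ must all fit simultaneously into the finite reserve $\tfrac{4(p-1)}{p}-2\sqrt\delta$, which is positive precisely because $p>p_\delta$; alongside this, the chain-rule identities for the nonsmooth $u_c^{p/2}$ must be justified through the mollified $\beta_\varepsilon(u-c)$.
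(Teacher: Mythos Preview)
Your proof is correct and follows essentially the same route as the paper: test against $p\,u_c^{p-1}\eta^2$, handle the drift via form-boundedness with the optimal Cauchy--Schwarz split to produce the coefficient $\tfrac{4(p-1)}{p}-2\sqrt{\delta}$, split the source into $\{|\mathsf{h}|\ge1\}$ and $\{|\mathsf{h}|<1\}$, and invoke $\mathsf{h}\in\mathbf{F}_\nu$ on the former. The only tactical differences are that the paper absorbs the $\int g_\delta\langle u_c^p\eta^2\rangle$ and $\int g_\nu\langle u_c^p\eta^2\rangle$ terms by first taking $t-s$ small and then iterating (``reproduction property'') rather than by Gronwall, and on $\{|\mathsf{h}|\ge1\}$ it uses the one-step bound $|\mathsf{h}|\le|\mathsf{h}|^{2/p'}$ followed by Young with exponents $(p',p)$ in place of your two-step Young; both variants give the same estimate with generic constants.
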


The last term in the RHS has this form because this is what will be need in the next section. There we will consider 1) $\mathsf{h}=b$, in order to apply a tightness argument to construct a candidate for the martingale solution to \eqref{sde}, and 2) $\mathsf{h}=b_{m_1}-b_{m_2}$ where $\{b_m\} \subset \mathbf{F}_\delta \cap C^\infty_c(]0,\infty[ \times \mathbb R^d)$ is an approximation of a (discontinuous) $b \in \mathbf{F}_\delta$, in order to pass to the limit in the martingale problem; we will take $f=|\nabla \varphi|$, where $\varphi \in C_c^2$ is a test function in the martingale problem.

\begin{proof}[Proof of Proposition \ref{c_prop}]
Put for brevity $v:=u_c$. It suffices to prove
\begin{align}
\sup_{\vartheta \in [s,t]}\langle v^p(\vartheta) \eta^2\rangle &  + \int_s^t \langle |\nabla v^{\frac{p}{2}}|^2 \eta^2 \rangle \label{req_ineq} \\
& \leq  C_1 \langle v^p(s) \eta^2\rangle  + C_2\int_s^t \langle v^p|\nabla\eta|^2\rangle + C_3  \int_s^t \big\langle \bigl(\mathbf{1}_{\{|\mathsf{h}| \geq 1\}} + \mathbf{1}_{\{|\mathsf{h}| < 1\}}|\mathsf{h}|^p\bigr)\mathbf{1}_{\{v>0\}}|f|^p \eta^2 \big\rangle . \notag
\end{align}
We multiply equation \eqref{k_eq3} by $v^{p-1} \eta^2$ and integrate to obtain
\begin{align}
\langle v^p(t) \eta^2\rangle  - \langle v^p(s) \eta^2\rangle & + \frac{4(p-1)}{p}\int_s^t \langle |\nabla v^{\frac{p}{2}}|^2\eta^2 \rangle \notag \\
& \leq 4\left|\int_s^t \langle \nabla v^{\frac{p}{2}}, v^{\frac{p}{2}}\eta \nabla \eta\rangle \right| + 2\left| \int_s^t \langle b\cdot \nabla v^{\frac{p}{2}}, v^{\frac{p}{2}}\eta^2\rangle \right| + \left|\int_s^t \langle |\mathsf{h}|f v^{p-1}\eta^2\rangle\right| \notag
\end{align}
where we estimate in the RHS:

1.~
$$
4\left|\int_s^t \langle \nabla v^{\frac{p}{2}}, v^{\frac{p}{2}}\eta \nabla \eta\rangle \right| \leq 2\varepsilon_1 \int_s^t \langle |\nabla v^{\frac{p}{2}}|^2\eta^2 \rangle + \frac{2}{\varepsilon_1} \int_s^t \langle v^p|\nabla \eta|^2\rangle \qquad  (\varepsilon_1>0).
$$

2.~
\begin{align*}
2\left| \int_s^t \langle b\cdot \nabla v^{\frac{p}{2}}, v^{\frac{p}{2}}\eta^2\rangle \right| & \leq \frac{1}{\sqrt{\delta}}\int_s^t \langle |b|^2 v^p\eta^2\rangle + \sqrt{\delta} \int_s^t \langle |\nabla v^{\frac{p}{2}}|^2\eta^2 \rangle \\
& (\text{we are using $b \in \mathbf{F}_\delta$}) \\
& \leq \frac{1}{\sqrt{\delta}}\biggl( \delta \int_s^t \langle |\nabla (\eta v^{\frac{p}{2}})|^2 \rangle + \int_s^t g_\delta \langle  v^p  \eta^2\rangle \biggr) + \sqrt{\delta} \int_s^t \langle |\nabla v^{\frac{p}{2}}|^2\eta^2 \rangle.
\end{align*}
We bound $|\nabla (\eta v^{\frac{p}{2}})|^2 \leq (1+\varepsilon_2)\langle |\nabla v^{\frac{p}{2}}|^2\eta^2 \rangle + (1+\varepsilon_2^{-1})\langle v^p |\nabla \eta|^2\rangle $, $\varepsilon_2>0$.
Then
\begin{align*}
\sup_{\vartheta \in [s,t]}\langle v^p(\vartheta) \eta^2\rangle & + \biggl[\frac{4(p-1)}{p} - (2+\varepsilon_2)\sqrt{\delta}-2\varepsilon_1 \biggr]\int_s^t \langle |\nabla v^{\frac{p}{2}}|^2\eta^2 \rangle \notag \\
& \leq  \langle v^p(s) \eta^2\rangle + \frac{1}{\sqrt{\delta}}\int_s^t g_\delta \langle v^p \eta^2 \rangle + C_2' \int_s^t \langle v^p|\nabla \eta|^2\rangle + \left|\int_s^t \langle |\mathsf{h}|f v^{p-1}\eta^2\rangle\right|.
\end{align*}
Now, assuming first that $T>0$ is sufficiently small so that $\frac{1}{\sqrt{\delta}}\bigl(\int_s^t g_\delta\bigr)<\frac{1}{3}$, we obtain
\begin{align}
\frac{2}{3}\sup_{\vartheta \in [s,t]}\langle v^p(\vartheta) \eta^2\rangle  & + \biggl[\frac{4(p-1)}{p} - (2+\varepsilon_2)\sqrt{\delta}-2\varepsilon_1 \biggr]\int_s^t \langle |\nabla v^{\frac{p}{2}}|^2\eta^2 \rangle \notag \\
& \leq C_1'\langle v^p(s) \eta^2\rangle + C_2' \int_s^t \langle v^p|\nabla \eta|^2\rangle +  C_3'\left|\int_s^t \langle |\mathsf{h}|f v^{p-1}\eta^2\rangle\right|. \label{v_ineq}
\end{align}
Next, using the reproduction property, we extend the last inequality to arbitrary $T>0$ (at expense of increasing $C_i'=C_i'(T)$, $i=1,2,3$).

It remains to estimate the last term in the RHS of \eqref{v_ineq}:
\begin{align}
&\left|\int_s^t \langle |\mathsf{h}|f v^{p-1}\eta^2\rangle \right| \notag \\
&\leq  \int_s^t \langle \mathbf{1}_{|\mathsf{h}| \geq 1}|\mathsf{h}|^{\frac{2}{p'}}|f| |v|^{p-1}\eta^2\rangle + \int_s^t \langle \mathbf{1}_{|\mathsf{h}| < 1}|\mathsf{h}||f| |v|^{p-1}\eta^2\rangle  =: I_1+I_2, \label{h_term}
\end{align}
where, by Young's inequality,
\begin{align*}
I_1 & \leq \frac{\varepsilon_3^{p'}}{p'} \int_s^t \langle \mathbf{1}_{|\mathsf{h}| \geq 1}|\mathsf{h}|^2 v^p\eta^2\rangle + \frac{\varepsilon_3^{-p}}{p}\int_s^t \langle \mathbf{1}_{|\mathsf{h}| \geq 1}\mathbf{1}_{\{v>0\}}|f|^p \eta^2\rangle \qquad (\varepsilon_3>0) \\
& (\text{we are using $\mathsf{h} \in \mathbf{F}_\nu$}) \\
& \leq \frac{\varepsilon_3^{p'}}{p'} \biggl( \nu \int_s^t |\nabla(\eta v^{\frac{p}{2}}) |^2\rangle + \int_s^t g_\nu \langle v^p\eta^2\rangle\biggr) + \frac{\varepsilon_3^{-p}}{p}\int_s^t \langle \mathbf{1}_{|\mathsf{h}| \geq 1}\mathbf{1}_{\{v>0\}}|f|^p \eta^2\rangle,
\end{align*}
and
$$
I_2 \leq  \frac{\varepsilon_4^{p'}}{p'} \int_s^t \langle \mathsf{1}_{|\mathsf{h}| <1} v^p\eta^2 \rangle + \frac{\varepsilon_4^{-p}}{p}\int_s^t \langle \mathbf{1}_{|\mathsf{h}| < 1}|\mathsf{h}|^p\mathbf{1}_{\{v>0\}}|f|^p \eta^2\rangle \qquad (\varepsilon_4>0).
$$
Inserting these estimates in \eqref{v_ineq} and taking care of the term $\int_s^t g_\nu \langle v^p\eta^2\rangle$ in the same way as we did 
above, we arrive at
\begin{align*}
& C\sup_{r \in [s,t]}\langle v^p(r) \eta^2\rangle   + \biggl[\frac{4(p-1)}{p} - (2+\varepsilon_2)\sqrt{\delta}-2\varepsilon_1 - \frac{\varepsilon_3^{p'}}{p'}\nu - \frac{\varepsilon_4^{p'}}{p'}  \biggr]\int_s^t \langle |\nabla v^{\frac{p}{2}}|^2\eta^2 \rangle \notag \\
& \leq C_1''\langle v^p(s) \eta^2\rangle + C_2'' \int_s^t \langle v^p|\nabla \eta|^2\rangle +  C_3''\int_s^t \big\langle \bigl(\mathbf{1}_{\{|\mathsf{h}| \geq 1\}} + \mathbf{1}_{\{|\mathsf{h}| < 1\}}|\mathsf{h}|^p\bigr)\mathbf{1}_{\{v>0\}}|f|^p \eta^2 \big\rangle,
\end{align*}
where the appropriate constant $C$ is positive provided that $\varepsilon_3$, $\varepsilon_4$ are sufficiently small.
Note that $\frac{4(p-1)}{p}-2\sqrt{\delta}>0$ if and only if $p>p_\delta$. Since the latter is a strict inequality, we can and will select  $\varepsilon_i$ ($i=1,2,3,4$) sufficiently small so that the coefficient of $\int_s^t \langle |\nabla v^{\frac{p}{2}}|^2\eta^2 \rangle$ is positive. We arrive at \eqref{req_ineq}, as needed.
\end{proof}

\begin{remark}Apart from the weight $\eta$ with compact support, we will also consider the weight
$$
\rho(x)=(1+\kappa|x|^{-2})^{-\beta}, \quad \beta>\frac{d}{4}, \quad \kappa>0.
$$
Then, in the assumptions of Proposition \ref{c_prop}, assuming that $\kappa$ is chosen sufficiently small, we  have for every $p>p_c$, $p \geq 2$, for all $0 \leq s \leq t$,
\begin{align}
\sup_{\vartheta \in [s,t]}\langle u_c^p(\vartheta)\rho^2 \rangle &  + \int_s^t \langle |\nabla (\rho  u_c^{\frac{p}{2}})|^2 \rangle \notag \\
& \leq  C_1\langle u_c^p(s) \rho^2\rangle  + C_2  \int_s^t \big\langle \bigl(\mathbf{1}_{\{|\mathsf{h}| \geq 1\}} + \mathbf{1}_{\{|\mathsf{h}| < 1\}}|\mathsf{h}|^p\bigr)\mathbf{1}_{\{u>c\}}|f|^p\rho^2\big\rangle. \label{c_ineq2} 
\end{align}
The proof essentially repeats the proof of \eqref{c_ineq} (we use $|\nabla \rho| \leq \beta\sqrt{\kappa} \rho$ at the last step to get rid of the $C_2$ term in \eqref{c_ineq}).
\end{remark}

\begin{lemma}[{\cite[Sect.7.2]{G}}]
\label{dg_lemma}
If $\{y_m\}_{m=0}^\infty \subset \mathbb R_+$ is a nondecreasing sequence such that
$$
y_{m+1} \leq N C_0^m y^{1+\alpha}_m
$$
for some $C_0>1$, $\alpha>0$, and
$$
y_0 \leq N^{-\frac{1}{\alpha}}C_0^{-\frac{1}{\alpha^2}}.
$$
Then
$$
\lim_m y_m=0.
$$
\end{lemma}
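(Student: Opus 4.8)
The plan is to establish, by induction on $m$, the explicit geometric decay estimate
\[
y_m \leq C_0^{-m/\alpha}\,y_0 \qquad \text{for all } m \geq 0,
\]
from which $\lim_m y_m = 0$ follows at once, since $C_0 > 1$ and $y_0$ is finite. (Note that monotonicity of $\{y_m\}$ is not actually needed for this argument; combined with the conclusion it merely forces $y_m \equiv 0$.) The one piece of insight is the choice of the rate $C_0^{-1/\alpha}$: it is dictated by requiring that, after substituting the inductive bound into the recursion, the powers of $C_0$ balance against the exponent $1+\alpha$, so that the residual smallness is controlled purely by a condition on $y_0$.

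First I would check the base case $m=0$, which is the trivial identity $y_0 \leq y_0$. For the inductive step, assume $y_m \leq C_0^{-m/\alpha}\,y_0$. Inserting this into the hypothesis $y_{m+1} \leq N C_0^m y_m^{1+\alpha}$ gives
\[
y_{m+1} \leq N C_0^m \bigl(C_0^{-m/\alpha}\,y_0\bigr)^{1+\alpha} = N\,C_0^{\,m - m(1+\alpha)/\alpha}\,y_0^{1+\alpha} = N\,C_0^{-m/\alpha}\,y_0^{1+\alpha}.
\]
To close the induction it then suffices that $N\,C_0^{-m/\alpha}\,y_0^{1+\alpha} \leq C_0^{-(m+1)/\alpha}\,y_0$, which after dividing by $C_0^{-m/\alpha} y_0$ is equivalent to $N\,y_0^{\alpha} \leq C_0^{-1/\alpha}$, i.e.\ to $y_0 \leq N^{-1/\alpha} C_0^{-1/\alpha^2}$ — precisely the standing smallness assumption on $y_0$. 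This completes the induction, and letting $m \to \infty$ in $y_m \leq C_0^{-m/\alpha} y_0$ yields $y_m \to 0$.

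There is no real obstacle here: the proof is a single short induction once the decay ansatz is identified, and the threshold $N^{-1/\alpha}C_0^{-1/\alpha^2}$ is exactly what makes the inductive step self-reproducing. (If one wanted the quantitatively sharp statement one could instead track the bound $y_m \leq C_0^{-m/\alpha} y_0$ with any admissible ratio, but the above is all that is needed for the application, where $\{y_m\}$ will be the localized $L^p$ energies at the De Giorgi truncation levels and the hypothesis $y_0$ small will be arranged by choosing the initial cut high enough, using the energy inequality of Proposition~\ref{c_prop}.)
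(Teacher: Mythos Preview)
Your proof is correct and is exactly the standard induction argument for this classical iteration lemma. The paper does not actually supply a proof of Lemma~\ref{dg_lemma}; it simply cites \cite[Sect.~7.2]{G}, so there is nothing to compare against beyond noting that your argument is the one found there (and that, as you observe, the word ``nondecreasing'' in the statement is immaterial to the proof and is presumably a slip for ``nonincreasing'').
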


\begin{proposition} 
\label{mp}
Let $u$ be the solution to Cauchy problem \eqref{k_eq3}. Fix $T>0$ and $1<\theta<\frac{d}{d-1}$.
For all $p>p_\delta$, $p \geq 2$, there exists a generic constant $K$ such that 
\begin{align}
\sup_{[0,T] \times B(0,\frac{1}{2})}u_+  & \leq 2 \biggl(\int_0^{T}\big\langle \big(\mathbf{1}_{\{|\mathsf{h}| \geq 1\}} + \mathbf{1}_{\{|\mathsf{h}| < 1\}}|\mathsf{h}|^p\big)^{\theta'} |f|^{p\theta' }\mathbf{1}_{B(0,1)}\big\rangle\biggr)^{\frac{1}{p\theta'}} \label{mp_f}
  \\
	& + K \biggl( \int_0^{T} \langle u_+^p \mathbf{1}_{B(0,1)}\rangle + \biggl(\int_0^{T} \langle u_+^{ p\theta} \mathbf{1}_{B(0,1)}\rangle\biggr)^{\frac{1}{\theta}} \biggr)^{\frac{1}{p}}, \qquad \theta'=\frac{\theta}{\theta-1}. \notag
\end{align}

\end{proposition}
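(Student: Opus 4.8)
The plan is to run a De~Giorgi iteration, carried out in $L^p$ (with $p>p_\delta$, $p\geq 2$ fixed) rather than in $L^2$, powered by the energy inequality of Proposition~\ref{c_prop}, the parabolic Gagliardo--Nirenberg inequality, and Lemma~\ref{dg_lemma}. A useful simplification is that $u(0)=0$: for any level $c\geq 0$ and $s=0$ in \eqref{c_ineq} the term $C_1\langle u_c^p(s)\eta^2\rangle$ vanishes, so the iteration localizes in space only. Fix an auxiliary exponent $\theta$ with $1<\theta<\frac{d}{d-1}$; since $\frac d{d-1}<\frac{d+2}{d}$ for $d\geq 3$, this ensures $\theta<1+\frac2d$ (equivalently $2\theta<2+\frac4d$) and $\frac1\theta>\frac{d}{d+2}$. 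Take radii $r_m:=\tfrac12+2^{-m-1}\downarrow\tfrac12$, cutoffs $\eta_m\in C_c^\infty(B(0,r_m))$ with $\eta_m\equiv1$ on $\bar B(0,r_{m+1})$ and $|\nabla\eta_m|\leq C2^m$, levels $c_m:=M(1-2^{-m})\uparrow M$ with $M>0$ to be fixed, and
\[
y_m^{(1)}:=\int_0^T\langle(u-c_m)_+^p\,\eta_m^2\rangle,\qquad y_m^{(2)}:=\Bigl(\int_0^T\langle(u-c_m)_+^{p\theta}\,\eta_m^{2\theta}\rangle\Bigr)^{1/\theta},\qquad z_m:=y_m^{(1)}+y_m^{(2)} .
\]
Then $\{z_m\}$ is nonincreasing and $z_0\leq\mathcal E:=\int_0^T\langle u_+^p\mathbf 1_{B(0,1)}\rangle+\bigl(\int_0^T\langle u_+^{p\theta}\mathbf 1_{B(0,1)}\rangle\bigr)^{1/\theta}$ (the case $\mathcal E=0$ being trivial).

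The core step is a recursion $z_{m+1}\leq N\,M^{-p\gamma}C_0^{\,m}z_m^{1+\gamma}$. Apply \eqref{c_ineq} with $c=c_{m+1}$, $\eta=\eta_{m+1}$, $s=0$, $t=T$; writing $w:=\eta_{m+1}(u-c_{m+1})_+^{p/2}$ and $A_{m+1}:=\{(t,x)\in[0,T]\times B(0,r_{m+1}):u(t,x)>c_{m+1}\}$, the left side is $E_{m+1}:=\sup_t\langle w^2\rangle+\int_0^T\langle|\nabla w|^2\rangle$ (the initial term dropping). On the right, the $C_2$-term is $\lesssim 4^m y_m^{(1)}$ since $|\nabla\eta_{m+1}|\leq C2^m$ and $\eta_m\equiv1$ on its support; the $C_3$-term is bounded, by Hölder in $[0,T]\times\mathbb R^d$ with exponents $\theta',\theta$, by $F^p|A_{m+1}|^{1/\theta}$ with $F$ the first quantity on the right of \eqref{mp_f}, and then, by Chebyshev in $L^{p\theta}$, $|A_{m+1}|^{1/\theta}\leq(2^{m+1}/M)^p\,y_m^{(2)}$, so that — once $M\geq 2F$, hence $F^p/M^p\leq 2^{-p}$ — this term is $\leq 2^{pm}y_m^{(2)}$. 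Thus $E_{m+1}\leq C\,2^{pm}z_m$. Feeding $w$ into the parabolic Gagliardo--Nirenberg inequality $\int_0^T\langle w^{2+4/d}\rangle\leq C(\sup_t\langle w^2\rangle)^{2/d}\int_0^T\langle|\nabla w|^2\rangle\leq C\,E_{m+1}^{1+2/d}$ and combining with Hölder on $A_{m+1}$ (the estimate of $y_{m+1}^{(2)}$ using $2\theta<2+\frac4d$) gives
\[
y_{m+1}^{(1)}\leq C\,E_{m+1}\,|A_{m+1}|^{\frac2{d+2}},\qquad y_{m+1}^{(2)}\leq C\,E_{m+1}\,|A_{m+1}|^{\frac1\theta-\frac{d}{d+2}},
\]
both exponents on $|A_{m+1}|$ being positive. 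Using $|A_{m+1}|\leq|[0,T]\times B(0,1)|$ one lowers the first exponent to $\gamma:=\frac1\theta-\frac d{d+2}$; combining with $E_{m+1}\leq C\,2^{pm}z_m$ and $|A_{m+1}|\leq(2^{m+1}/M)^p\,y_m^{(1)}$ (Chebyshev in $L^p$) yields the claimed recursion with generic $N,C_0$.

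It remains to fix $M$. Take $M:=2F+K\mathcal E^{1/p}$ with $K:=\bigl(N^{1/\gamma}C_0^{1/\gamma^2}\bigr)^{1/p}$, which is generic. The requirement $M\geq 2F$ was used above; the requirement $M\geq K\mathcal E^{1/p}$ gives $z_0\leq\mathcal E\leq K^{-p}M^p=N^{-1/\gamma}C_0^{-1/\gamma^2}M^p=(N M^{-p\gamma})^{-1/\gamma}C_0^{-1/\gamma^2}$, which is exactly the smallness hypothesis of Lemma~\ref{dg_lemma} for the recursion above. Hence $z_m\to0$; since $c_m\leq M$ and $\eta_m\equiv1$ on $\bar B(0,\tfrac12)$ we have $y_m^{(1)}\geq\int_0^T\langle(u-M)_+^p\mathbf 1_{B(0,1/2)}\rangle$ for every $m$, so in the limit $(u-M)_+\equiv 0$ on $[0,T]\times B(0,\tfrac12)$ (recall $u$ is continuous), i.e.\ $\sup_{[0,T]\times B(0,1/2)}u_+\leq M$, which is \eqref{mp_f}.

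Given Proposition~\ref{c_prop}, the iteration mechanics are otherwise standard; the delicate point is keeping the source ($|f|^p$) term from reintroducing a dependence on $M$. A plain $L^p$-Chebyshev bound for $|A_{m+1}|$ there would leave an unwanted factor $M^{p(1-1/\theta)}\to\infty$, which is precisely why the $L^{p\theta}$-localized quantity $y_m^{(2)}$ has to be propagated through the iteration alongside $y_m^{(1)}$ — whence the two terms in \eqref{mp_f} — and why $\theta$ must be taken strictly below $\tfrac d{d-1}$, equivalently strictly below the parabolic Gagliardo--Nirenberg exponent $1+\tfrac2d$, so that the interpolation bounding $y_{m+1}^{(2)}$ retains a positive power of $|A_{m+1}|$. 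The only structural novelty needed to reach $\delta<4$, namely running the basic energy estimate in $L^p$ with $p>p_\delta$, is already carried out in Proposition~\ref{c_prop}, and it is what the whole scheme rests on.
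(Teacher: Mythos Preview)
Your proof is correct and follows essentially the same De~Giorgi-in-$L^p$ scheme as the paper: the same two-term iterate (your $z_m$ is the paper's $U_m$), the same use of Proposition~\ref{c_prop} with the source term handled via H\"older and the $L^{p\theta}$ Chebyshev bound under the constraint $M\gtrsim F$, and the same closure by Lemma~\ref{dg_lemma}. The only cosmetic difference is that you embed via the parabolic Gagliardo--Nirenberg norm $L^{2+4/d}_{t,x}$ for $w$, whereas the paper interpolates through $L^{2p}_tL^{pd/(d-1)}_x$ for $u_{m+1}$; both lead to the same recursion.
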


\begin{proof}[Proof of Proposition \ref{mp}]
Set $$R_m:=\frac{1}{2}(1+2^{-m}), \quad B_m:=B(0,R_m),$$
$$
M_m:=M(2-2^{-m}) 
$$
for a constant $M>0$ to be determined later.
Put $\eta_m:=\eta_{R_{m},R_{m-1}}$ where $\eta_{r,R}$ is a fixed family of smooth cutoff functions 
\begin{equation}
\label{eta_f}
\eta_{r,R} =1 \text{ in } B(0,r), \quad \eta_{r,R} = 0 \text{ in $\mathbb R^d - B(0,R)$}, \quad |\nabla \eta_{r,R}| \leq \frac{c_0}{4}(R-r)^{-1} \text{ for $0<r<R$}.
\end{equation}
Then $|\nabla \eta_m| \leq c_02^m$.
Define
$$
u_m:=(u-M_m)_+
$$
and
$$E_m:=\sup_{\vartheta \in [0,T]} \langle u_m^p(\vartheta)\eta_m^2\rangle + \int_0^{T} \langle |\nabla (\eta_m u_m^{\frac{p}{2}})|^2\rangle,
$$
$$
U_m:=\int_0^{T} \langle u_m^p \mathbf{1}_{B_m}\rangle + \biggl(\int_0^{T} \langle u_m^{p\theta} \mathbf{1}_{B_m}\rangle\biggr)^{\frac{1}{\theta}}.
$$

By Proposition \ref{c_prop}, using H\"{o}lder's inequality, we have for all $0 \leq t \leq T$
\begin{align}
\sup_{\vartheta \in [0,t]}\langle u_{m+1}^p(\vartheta)\eta_{m+1}^2\rangle & +  \int_0^t \langle |\nabla (\eta_{m+1}u_{m+1}^{\frac{p}{2}})|^2 \rangle \notag \\
& \leq C_2 c_0^24^m \int_0^t \langle u_{m+1}^p \mathbf{1}_{B_m} \rangle  + C_3 H^{\frac{1}{\theta'}} \big|\{u_{m+1}>0 \} \cap [0,t] \times B_{m}\big| ^{\frac{1}{\theta}}, \label{h_eq}
\end{align}
where
$$
H:= \int_0^{T}\big\langle \big(\mathbf{1}_{\{|\mathsf{h}| \geq 1\}} + \mathbf{1}_{\{|\mathsf{h}| < 1\}}|\mathsf{h}|^p\big)^{\theta'} |f|^{p\theta' }\mathbf{1}_{B(0,1)}\big\rangle.
$$
We estimate the last term in \eqref{h_eq}:
\begin{align}
\big|\{u_{m+1}>0 \} \cap [0,t] \times B_{m}\big| ^{\frac{1}{\theta}} & =
\biggl(\int_{0}^t \langle \mathbf{1}_{\{u_{m}>M2^{-m-1}\}}\mathbf{1}_{B_m}\rangle \biggr)^{\frac{1}{\theta}} \notag \\
& \leq (M2^{-m-1})^{-p}\biggl(\int_{0}^t \langle u^{p\theta}_m\mathbf{1}_{\{u_{m}>M2^{-m-1}\}}\mathbf{1}_{B_m}\rangle \biggr)^{\frac{1}{\theta}} \label{lt}.
\end{align}
We assume from now on that $M$ satisfies $M^p \geq H^{\frac{1}{\theta'}}$.
Then \eqref{h_eq} and \eqref{lt} yield
\begin{equation}
\label{e1}
E_{m+1} \leq C_4^m U_m \quad \text{ for appropriate constant $C_4$}.
\end{equation}

Next, using the Sobolev Embedding Theorem, we have
\begin{align*}
\sup_{[0,T]} \langle u_{m+1}^p \mathbf{1}_{B_{m+1}}\rangle & + c_S \int_0^{T} \|\mathbf{1}_{B_{m+1}}u_{m+1}\|^p_{\frac{pd}{d-2}} \leq E_{m+1}
\end{align*}
Applying H\"{o}lder's inequality and Young's inequality, we have
\begin{align*}
c\|\mathbf{1}_{B_{m+1}}u_{m+1}\|^p_{L^{2p}([0,T],L^{\frac{pd}{d-1}})} \leq \sup_{[0,T]} \langle u_{m+1}^p \mathbf{1}_{B_{m+1}}\rangle  + c_S \int_0^{T} \|\mathbf{1}_{B_{m+1}}u_{m+1}\|^p_{\frac{pd}{d-2}}
\end{align*}
for a $c>0$.
Next, applying H\"{o}lder's inequality to both terms in the definition of $U_{m+1}$, we obtain, for appropriate $\alpha>0$,
\begin{align*}
U_{m+1}  \leq c_2\|\mathbf{1}_{B_{m+1}}u_{m+1}\|^p_{L^{2p}([0,T],L^{\frac{pd}{d-1}})}\big|\{u_{m+1}>0\} \cap [0,T] \times  B_m\big|^\frac{\alpha}{\theta}.
\end{align*}
Combining this with the previous estimate, we have
\begin{equation}
\label{e2}
U_{m+1} \leq c_2c^{-1}E_{m+1}\big|\{u_{m+1}>0\} \cap [0,T] \times B_m\big|^\frac{\alpha}{\theta}
\end{equation}

Now, \eqref{e1} and \eqref{e2} yield
\begin{align*}
U_{m+1} & \leq c_2c^{-1}C_4^m U_m \big|\{u_{m+1}>0\} \cap [0,T] \times  B_m\big|^\frac{\alpha}{\theta}.
\end{align*}
Applying \eqref{lt} to the last multiple, we obtain
$$
U_{m+1} \leq M^{-p\alpha}C_5^m U_m^{1+\alpha}
$$
for constant $C_5=C_5(C_4,c,c_2,\alpha)$.

To end the proof, we fix $M$ by
$
M = H^{\frac{1}{p\theta'}} + C_5^{\frac{1}{p\alpha^2}}U_0^{\frac{1}{p}}.
$
Then $U_0 \leq C_5^{-\frac{1}{\alpha^2}}M^p$. We now apply Lemma \ref{dg_lemma} (with $N=M^{-p\alpha}$) to obtain
$$
\lim_m U_m=0.
$$
On the other hand, 
$$
\int_0^{T}(u-2M)_+^p \mathbf{1}_{B(0,\frac{1}{2})} \leq \lim_m U_m.
$$
It follows that
\begin{align*}
\sup_{[0,T] \times B(0,\frac{1}{2})}u_+  \leq 2M & \leq 2 \biggl(\int_0^{T}\big\langle \big(\mathbf{1}_{\{|\mathsf{h}| \geq 1\}} + \mathbf{1}_{\{|\mathsf{h}| < 1\}}|\mathsf{h}|^p\big)^{\theta'} |f|^{p\theta' }\mathbf{1}_{B(0,1)}\big\rangle\biggr)^{\frac{1}{p\theta'}} \\
&  + K \biggl( \int_0^{T} \langle u_+^p \mathbf{1}_{B(0,1)}\rangle + \biggl(\int_0^{T} \langle u_+^{ p\theta} \mathbf{1}_{B(0,1)}\rangle\biggr)^{\frac{1}{\theta}} \biggr)^{\frac{1}{p}}
\end{align*}
for a generic constant $K$, as claimed.
\end{proof}

\begin{remark}
\label{div_rem}
Let $\eta \in C_c^\infty$ be a refined cutoff function satisfying $|\nabla \eta| \leq c\eta^{1-\gamma}$ for some $0<\gamma<1$, $c>0$. In fact, 
the weights $\eta=\eta_{r,R}$ in \eqref{eta_f} can be chosen to satisfy this bound with generic $\gamma$, $c_0$:
$$
|\nabla \eta_{r,R}| \leq c_0 (R-r)^{-1} \eta_{r,R}^{1-\gamma}, \quad 0<r<R.
$$
See \cite{BS,Z}.
With such choice of the weights, Proposition \ref{c_prop} and thus Proposition \ref{mp} are also valid for $b=b_1+b_2$ where $b_1 \in \mathbf{F}_{\delta_1}$, $\delta_1<4$, and $b_2$ satisfies \eqref{fb2}, \eqref{fb3} with  $\nu<\frac{4(p-1)}{p}-2\sqrt{\delta_1}$, $p>\frac{2}{2-\sqrt{\delta_1}}$,
and $\mathsf{h}$ satisfies \eqref{fb2} with some form-bound $\nu<\infty$.
Indeed, we only need to complement the proof of Proposition \ref{c_prop} by evaluating, using the integration by parts,
\begin{align*}
-2 \int_s^t \langle b_2\cdot \nabla v^{\frac{p}{2}}, v^{\frac{p}{2}}\eta^2\rangle  = \int_s^t \langle {\rm div\,}b_2, v^p\eta^2\rangle  + 2\int_s^t \langle b_2 v^p \eta \nabla \eta\rangle
\end{align*}
and then estimating the RHS from above as follows.
We apply the form-boundedness condition on $({\rm div\,}b_2)_+$, i.e.\,\eqref{fb3}. As for the last term, we have
for every $\varepsilon_5>0$, by Young's inequality,
\begin{align*}
2\int_s^t \langle b_2 v^p, \eta\nabla \eta\rangle & \leq \frac{\varepsilon_5^{1+a}}{1+a}\int_s^t \langle |b_2|^{1+a} v^p \eta^2\rangle  +  \frac{a}{a+1}\varepsilon_5^{-\frac{a+1}{a}}\int_s^t \langle v^p |\nabla \eta|^{\frac{a+1}{a}}\rangle \\
& \leq \frac{\varepsilon_5^{1+a}}{1+a}\int_s^t \langle |b_2|^{1+a} v^p \eta^2\rangle  +  C_{\varepsilon_5}(R-r)^{-\frac{a+1}{a}}\int_s^t \langle v^p \eta^{\frac{a+1}{a}(1-\gamma)}\rangle.
\end{align*}
We apply \eqref{fb2} in the first term and $\eta^{\frac{a+1}{a}(1-\gamma)} \leq \mathbf{1}_{\{\eta>0\}}$ in the second term. Finally, assuming that $p$ is chosen so that $\frac{1+a}{p'} \geq 1$, we modify \eqref{h_term} as
\begin{align*}
\left|\int_s^t \langle |\mathsf{h}|f v^{p-1}\eta^2\rangle \right| \leq  \int_s^t \langle \mathbf{1}_{|\mathsf{h}| \geq 1}|\mathsf{h}|^{\frac{1+a}{p'}}|f| |v|^{p-1}\eta^2\rangle + \int_s^t \langle \mathbf{1}_{|\mathsf{h}| < 1}|\mathsf{h}||f| |v|^{p-1}\eta^2\rangle,
\end{align*}
so, after applying Young's inequality as in the proof, we can use condition \eqref{fb2} for $\mathsf{h}$.
Now we can repeat the rest of the proof of Proposition \ref{c_prop}. (We arrive at \eqref{c_ineq} with $(R-r)^{-\frac{a+1}{a}}\mathbf{1}_{\{\eta>0\}}$ instead of $|\nabla \eta|^2$, but this is what we need in Proposition \ref{mp} anyway.) Of course, the form-bound $\delta_2$ of $b_2$ can be arbitrarily large since we can choose $\varepsilon_5$ as small as needed.
\end{remark}

Recall: $\rho(x)=(1+\kappa|x|^{-2})^{-\beta}$, $\beta>\frac{d}{4}$, $\kappa>0$ is sufficiently small.

\begin{proposition}
\label{cor1}
Let $u$ be the solution to Cauchy problem \eqref{k_eq3}. Fix $T>0$ and $1<\theta<\frac{d}{d-1}$.
For all $p>p_\delta$, $p \geq 2$, there exists a generic constant $C$ such that 
\begin{align*}
\|u\|_{L^\infty([0,T] \times \mathbb R^d)} & \leq C 
\sup_{z \in \mathbb Z^d}\biggl(\int_0^T \bigg\langle \big(\mathbf{1}_{\{|\mathsf{h}| \geq 1\}} + \mathbf{1}_{\{|\mathsf{h}| < 1\}}|\mathsf{h}|^p\big)^{\theta'} |f|^{p\theta' }\rho^2_z\bigg\rangle\biggr)^{\frac{1}{p\theta'}}
\end{align*}
where $\rho_z(x):=\rho(x-z)$.
\end{proposition}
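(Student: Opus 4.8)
The plan is to pass from the local estimate of Proposition~\ref{mp} to the global one by covering $\mathbb R^d$ with translates of $B(0,\tfrac12)$, and then to remove from its right‑hand side the a priori terms involving $u_+$ itself by iterating the energy inequality \eqref{c_ineq} outward through a geometric sequence of concentric balls. Throughout put $g:=(\mathbf 1_{\{|\mathsf h|\ge1\}}+\mathbf 1_{\{|\mathsf h|<1\}}|\mathsf h|^p)|f|^p$ and $\mathcal R:=\sup_{z\in\mathbb Z^d}\bigl(\int_0^T\langle g^{\theta'}\rho_z^2\rangle\bigr)^{1/(p\theta')}$, the right‑hand side of the Proposition (note $g^{\theta'}=(\mathbf 1_{\{|\mathsf h|\ge1\}}+\mathbf 1_{\{|\mathsf h|<1\}}|\mathsf h|^p)^{\theta'}|f|^{p\theta'}$).

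First I would apply Proposition~\ref{mp} to $u(\cdot,\cdot-y_0)$ for every $y_0\in\mathbb R^d$: this function solves \eqref{k_eq3} with $b,\mathsf h,f$ replaced by their $y_0$‑translates, which remain in $C^\infty_c(]0,\infty[\times\mathbb R^d)$ (resp.\ $C_c$) and, Definition~\ref{def1} being translation invariant, in $\mathbf F_\delta$, $\mathbf F_\nu$ with the same bounds; hence the generic constant $K$ is the same for all $y_0$. Using $\bigcup_{y_0}B(y_0,\tfrac12)=\mathbb R^d$ — and the same applied to $-u$, which solves \eqref{k_eq3} with source $|\mathsf h|(-f)$, to cover $u_-$ — this gives
\begin{align*}
\|u\|_{L^\infty([0,T]\times\mathbb R^d)}&\le 2\sup_{y_0}\Bigl(\int_0^T\langle g^{\theta'}\mathbf 1_{B(y_0,1)}\rangle\Bigr)^{\frac1{p\theta'}}\\
&\quad+K\sup_{y_0}\Bigl(\int_0^T\langle u_+^p\mathbf 1_{B(y_0,1)}\rangle+\Bigl(\int_0^T\langle u_+^{p\theta}\mathbf 1_{B(y_0,1)}\rangle\Bigr)^{\frac1\theta}\Bigr)^{\frac1p}.
\end{align*}
The reduction of the indicators to the weights $\rho_z$ is elementary: for any ball $B(y_0,R)$ there is $z\in\mathbb Z^d$ with $\dist(z,B(y_0,R))\ge1$, and there $\rho_z\ge(1+\kappa)^{-\beta}=:c_\ast>0$, so $\mathbf 1_{B(y_0,R)}\le c_\ast^{-2}\rho_z^2$ and hence $\int_0^T\langle g^{\theta'}\mathbf 1_{B(y_0,R)}\rangle\le c_\ast^{-2}\mathcal R^{p\theta'}$. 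In particular the first term above is $\le 2c_\ast^{-2/(p\theta')}\mathcal R$, and the same reduction will be used for every ball appearing below.

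It remains to bound the second term by $\mathcal R$. Since $u$ solves the \emph{smooth} problem \eqref{k_eq3} with compactly supported data, one checks $A:=\int_0^T\langle u_+^p\rangle<\infty$ (e.g.\ from the Gaussian bound on the fundamental solution of the smooth equation); this is used only qualitatively, the final constant being $\|b\|_\infty$‑free. Writing \eqref{c_ineq} with $c=0$, $s=0$ (the initial term drops, as $u(0)=0$) and a cutoff $\eta$ equal to $1$ on $B(y_0,R)$, vanishing outside $B(y_0,2R)$, with $|\nabla\eta|\le c_0/R$, then using $\int_0^T\langle u_+^p\mathbf 1_{B(y_0,R)}\rangle\le T\sup_\vartheta\langle u_+^p(\vartheta)\eta^2\rangle$, H\"older in $(t,x)$ over $[0,T]\times B(y_0,2R)$ for the source term, and the weight reduction, I obtain, with $\Phi(R):=\sup_{y_0}\int_0^T\langle u_+^p\mathbf 1_{B(y_0,R)}\rangle$,
\[
\Phi(R)\le C\,T\,R^{-2}\,\Phi(2R)+C\,R^{d/\theta}\mathcal R^p\,,\qquad R\ge1.
\]
Iterating along $R=2^j$, the coefficients telescope to $(CT)^j2^{-j(j-1)}$, which decays faster than any geometric sequence; hence $\bigl(\prod_{i<k}CT2^{-2i}\bigr)\Phi(2^k)\le(CT)^k2^{-k(k-1)}A\to0$, the series $\sum_j(CT)^j2^{-j(j-1)}2^{jd/\theta}$ converges, and $\Phi(1),\Phi(2)\le C'\mathcal R^p$. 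The $u_+^{p\theta}$‑contribution is then disposed of in one step exactly as in the proof of Proposition~\ref{mp}: the energy inequality (with the weight reduction and the bound on $\Phi(2)$) controls $\sup_\vartheta\langle u_+^p(\vartheta)\eta^2\rangle+\int_0^T\langle|\nabla(\eta u_+^{p/2})|^2\rangle$ for the $R=1$ cutoff by $C''\mathcal R^p$; Sobolev embedding together with the $L^\infty_tL^p_x$–$L^p_tL^{pd/(d-2)}_x$ interpolation gives $\|\mathbf 1_{B(y_0,1)}u_+\|^p_{L^{2p}([0,T],L^{pd/(d-1)})}\le C'''\mathcal R^p$; and H\"older with the bounded measure of $\{u_+>0\}\cap[0,T]\times B(y_0,2)$ (as in the passage to \eqref{e2}) yields $\bigl(\int_0^T\langle u_+^{p\theta}\mathbf 1_{B(y_0,1)}\rangle\bigr)^{1/\theta}\le C''''\mathcal R^p$. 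Combining, the second term is $\le K(C'+C'''')^{1/p}\mathcal R$, so $\|u\|_{L^\infty([0,T]\times\mathbb R^d)}\le C\mathcal R$, as claimed.

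I expect the main obstacle to be closing the $u_+$‑term: Proposition~\ref{mp} is only an a priori estimate in which $u_+$ recurs on the right, and it must be closed globally. The iteration above works because the annuli $B(y_0,2^{j+1})\setminus B(y_0,2^j)$ widen geometrically, so the cutoff gradients decay like $2^{-j}$ and the product of the recursion coefficients decays super‑geometrically, beating both the finite ``mass at infinity'' $A$ and the growing volume factor $2^{jd/\theta}$. (Alternatively one could first prove the estimate for small $T$ and concatenate via the reproduction property and the $L^\infty$‑contractivity of the Markov propagator $e^{-t(-\Delta+b\cdot\nabla)}$, but the widening‑annuli iteration avoids this.)
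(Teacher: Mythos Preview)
Your approach is correct in outline but takes a genuinely different route from the paper, and there is one small slip worth flagging.

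\textbf{What the paper does.} The paper never iterates outward through annuli. It goes straight from Proposition~\ref{mp} to the conclusion by invoking the \emph{weighted} energy inequality \eqref{c_ineq2}: since $|\nabla\rho|\le\beta\sqrt{\kappa}\,\rho$, the term $\int\langle u_c^p|\nabla\eta|^2\rangle$ in \eqref{c_ineq} is absorbed into the left-hand side, so \eqref{c_ineq2} with $c=0$, $s=0$ yields directly
\[
\sup_{\vartheta}\langle u_+^p(\vartheta)\rho^2\rangle+\int_0^T\langle|\nabla(\rho u_+^{p/2})|^2\rangle\le C\int_0^T\langle g\,\rho^2\rangle .
\]
Combined with $\rho\ge c_0\mathbf 1_{B(0,1)}$ and the Sobolev/interpolation step you yourself use, this controls both $\int_0^T\langle u_+^p\mathbf 1_{B(0,1)}\rangle$ and $(\int_0^T\langle u_+^{p\theta}\mathbf 1_{B(0,1)}\rangle)^{1/\theta}$ by $\int_0^T\langle g\,\rho^2\rangle$ and $(\int_0^T\langle g^\theta\rho^2\rangle)^{1/\theta}$, respectively. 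Since $\rho^2\in L^1(\mathbb R^d)$ (this is what $\beta>d/4$ is for), H\"older over the finite measure $\rho^2\,dx\,dt$ upgrades both to the $\theta'$ integral, and translating the centre finishes. No recursion, no qualitative use of $A=\int_0^T\langle u_+^p\rangle<\infty$.

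\textbf{Comparison.} Your widening-annuli iteration is a legitimate alternative: the super-geometric decay of $\prod_{i<j}2^{-2i}$ beats any polynomial growth in the inhomogeneous term, so the series closes. What it buys is that you never touch \eqref{c_ineq2}; the price is a longer argument plus the qualitative input $A<\infty$.

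\textbf{The slip.} Your weight reduction ``choose $z$ with $\dist(z,B(y_0,R))\ge1$, then $\rho_z\ge(1+\kappa)^{-\beta}$ on the ball'' reads the formula $\rho(x)=(1+\kappa|x|^{-2})^{-\beta}$ literally. That is a typo: with $|x|^{-2}$ the bound $|\nabla\rho|\le\beta\sqrt{\kappa}\,\rho$ stated just below fails and the condition $\beta>d/4$ is vacuous. The intended weight is $\rho(x)=(1+\kappa|x|^{2})^{-\beta}$, which decays at infinity. With the correct $\rho$ you must instead take $z$ the nearest lattice point to $y_0$, giving $\mathbf 1_{B(y_0,R)}\le c_\ast(R)^{-2}\rho_z^2$ with $c_\ast(R)\sim(1+\kappa R^2)^{-\beta}$. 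This makes the inhomogeneous term in your recursion $CR^{4\beta/\theta'+d/\theta}\mathcal R^p$ rather than $CR^{d/\theta}\mathcal R^p$, but the series $\sum_j(CT)^j2^{-j(j-1)}2^{j(4\beta/\theta'+d/\theta)}$ still converges, so your argument survives after this correction.
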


\begin{proof}[Proof of Proposition \ref{cor1}]
Applying $\rho \geq c_0\mathbf{1}_{B(0,1)}$ and \eqref{c_ineq2} to the last term in \eqref{mp_f} of Proposition \ref{mp}, we arrive at
\begin{align*}
\sup_{[0,T] \times B(0,\frac{1}{2})}u_+  & \leq C' 
\biggl(\int_0^T \big\langle \big(\mathbf{1}_{\{|\mathsf{h}| \geq 1\}} + \mathbf{1}_{\{|\mathsf{h}| < 1\}}|\mathsf{h}|^p\big)^{\theta'} |f|^{p\theta' }\rho^2\big\rangle\biggr)^{\frac{1}{p\theta'}} \\
& + C'' \biggl(\int_0^T \big\langle \big(\mathbf{1}_{\{|\mathsf{h}| \geq 1\}} + \mathbf{1}_{\{|\mathsf{h}| < 1\}}|\mathsf{h}|^p\big) |f|^{p}\rho^2\big\rangle\biggr)^{\frac{1}{p}} \\
& + C''' \biggl(\int_0^T \big\langle \big(\mathbf{1}_{\{|\mathsf{h}| \geq 1\}} + \mathbf{1}_{\{|\mathsf{h}| < 1\}}|\mathsf{h}|^p\big)^{\theta} |f|^{p\theta }\rho^2\big\rangle\biggr)^{\frac{1}{p\theta}} \equiv I_1+I_2+I_3.
\end{align*}
 Applying H\"{o}lder's inequality to $I_2$ and $I_3$ (using that $\theta'>\theta>1$), we arrive at 
$$
\|u\|_{L^\infty([0,T] \times B(0,\frac{1}{2}))}  \leq C 
\biggl(\int_0^T \bigg\langle \big(\mathbf{1}_{\{|\mathsf{h}| \geq 1\}} + \mathbf{1}_{\{|\mathsf{h}| < 1\}}|\mathsf{h}|^p\big)^{\theta'} |f|^{p\theta' }\rho^2\bigg\rangle\biggr)^{\frac{1}{p\theta'}}
$$
Since the choice of the centre of the ball $B(0,\frac{1}{2})$ was arbitrary, this ends the proof.
\end{proof}

\subsection{Proof of Theorem \ref{thm1}} \label{sde_subsect} Once Proposition \ref{cor1} is established, one can construct a martingale solution to \eqref{sde} via a standard tightness argument. The proof below, included for reader's convenience, follows \cite{ZZ, Zh,RZ}.

\begin{definition}
A probability measure $\mathbb P_x$ on the canonical space $\bigl(C([0,1],\mathbb R^d),\mathcal B_t=\sigma\{\omega_s \mid 0 \leq s \leq t\}\bigr)$ is called a martingale solution to the SDE \eqref{sde} if

1) $\mathbb P_x[\omega_0=x]=1$.

2)
$$
\mathbb E_{x}\int_0^t|b(s,\omega_s)|<\infty, \quad 0<t\leq 1 \qquad (\mathbb E_x:=\mathbb E_{\mathbb P_x}).
$$ 

3) For every $\varphi \in C_2^2$ the process
$$
M^\varphi_t:=\varphi(\omega_t)-\varphi(\omega_0) + \int_0^t (-\Delta \varphi + b \cdot \nabla \varphi)(s,\omega_s) ds
$$
is a martingale: $$\mathbb E_x[M^\varphi_{t_1} \mid \mathcal B_{t_0}]=M_{t_0}^\varphi$$ for all $0 \leq t_0<t_1 \leq 1$ $\mathbb P_x$-a.s. 
\end{definition}

Let $b$ be a vector field in $\mathbf{F}_\delta$, $\delta<4$, so in general it is locally unbounded. 
Let us fix bounded smooth vector fields $b_n \in C_c^\infty([0,\infty[ \times \mathbb R^d,\mathbb R^d) \cap \mathbf{F}_\delta$ (with $g=g_\delta$ independent of $n$) such that 
$$
b_n \rightarrow b \quad \text{ in } L^2_{\loc}([0,\infty[ \times \mathbb R^d,\mathbb R^d).
$$
Such vector fields can be constructed by multiplying $b$ by $\mathbf{1}_{\{0 \leq |t| \leq n, |x| \leq n, |b(x)| \leq n\}}$, which preserves the form-bound $\delta$, and then applying a K.\,Friedrichs mollifier in $(t,x)$, see \cite{KM} for details if needed. (In fact, we don't even need to include the indicator function, which allows to control the form-bound of ${\rm div}\,b$ \cite[Sect.\,3, 4]{KiS4}, cf.\,Remark \ref{div_rem}.)

Fix $x \in \mathbb R^d$. By a classical result, there exist strong solutions $X^n$ to the SDEs
$$
X^n_t=x-\int_0^t b_n(X^n_s)ds + \sqrt{2}dB_t, \quad n=1,2,\dots,
$$
where $B_t$ is a Brownian motion in $\mathbb R^d$ on a fixed complete probability space $(\Omega,\mathcal F,\mathcal F_t,\mathbf P)$.

Let $0 \leq t_0 < t_1 \leq 1$. Consider the terminal-value problem for $t \leq t_1$
$$
\partial_t u_n + \Delta u_n + b_n \cdot \nabla u_n + F=0, \quad u_n(t_1)=0,
$$
where $F \in C_c([0,1] \times \mathbb R^d)$.
Then the It\^{o} formula yields
$$
\mathbf E\int_{t_0}^{t_1} F(r,X^n_r)dr = u_n(t_0,X^n_{t_0}).
$$
Hence, selecting $F=|\mathsf{h}|f$, where $\mathsf{h} \in \mathbf{F}_\nu \cap C_c^\infty(\mathbb R^d,\mathbb R^d)$ and $f \in C_c$ are as in the previous section, we have by Proposition \ref{cor1}
\begin{equation*}
\left|\mathbf E\int_{t_0}^{t_1} |\mathsf{h}(s,X^n_s)|f(s,X^n_s)ds\right| \leq  
\sup_{z \in \mathbb Z^d}\biggl(\int_{t_0}^{t_1} \bigg\langle \big(\mathbf{1}_{\{|\mathsf{h}| \geq 1\}} + \mathbf{1}_{\{|\mathsf{h}| < 1\}}|\mathsf{h}|^p\big)^{\theta'} |f|^{p\theta' }\rho^2_z\bigg\rangle\biggr)^{\frac{1}{p\theta'}}.
\end{equation*}
Set $\mathbb P^n_x:=(\mathbf P \circ X^n)^{-1}$ -- probability measures on $\bigl(C([0,1],\mathbb R^d),\mathcal B_t\bigr)$. Then the last estimate can be rewritten as
\begin{equation}
\label{id}
\left|\mathbb E^n_x\int_{t_0}^{t_1} |\mathsf{h}(s,\omega_s)|f(s,\omega_s)ds\right| \leq  
\sup_{z \in \mathbb Z^d}\biggl(\int_{t_0}^{t_1} \bigg\langle \big(\mathbf{1}_{\{|\mathsf{h}| \geq 1\}} + \mathbf{1}_{\{|\mathsf{h}| < 1\}}|\mathsf{h}|^p\big)^{\theta'} |f|^{p\theta' }\rho^2_z\bigg\rangle\biggr)^{\frac{1}{p\theta'}},
\end{equation}
where $\mathbb E_x^n:=\mathbb E_{\mathbb P^n_x}$.
The following two instances of estimate \eqref{id} will yield the sought martingale solution:

\smallskip

1. \eqref{id} with $\mathsf{h}=b_n$ and $f \equiv 1$ (here $f \in C_c$ $\Rightarrow$ $f \equiv 1$  using Fatou's Lemma):
\begin{align*}
\left|\mathbb E^n_x\int_{t_0}^{t_1} |b_n(s,\omega_s)|ds\right| & \leq   
\sup_{z \in \mathbb Z^d}\biggl(\int_{t_0}^{t_1} \bigg\langle \big(\mathbf{1}_{\{|b_n| \geq 1\}} + \mathbf{1}_{\{|b_n| < 1\}}|b_n|^p\big)^{\theta'} \rho^2_z\bigg\rangle\biggr)^{\frac{1}{p\theta'}} \\
& \leq C (t_1-t_0)^\mu \quad \text{ for generic }\mu>0 \text{ and } C, 
\end{align*}
The latter allows to verify the tightness of $\{\mathbb P^n_x\}$, see \cite[proof of Theorem 1.1]{RZ}. Thus, there exists a subsequence $\{\mathbb P_x^{n_k}\}$ and a probability measure $\mathbb P_x$ on  $C([0,1],\mathbb R^d)$
such that
\begin{equation}
\label{w}
\mathbb P_x^{n_k} \rightarrow \mathbb P_x \text{ weakly }.
\end{equation}
Now, by \eqref{w} and the standard monotone class argument,
$$
\left|\mathbb E_x\int_{t_0}^{t_1} |b(s,\omega_s)|ds\right| \leq C (t_1-t_0)^\mu.
$$
Our goal now is to show that the limit measure $\mathbb P_x$ solves the martingale problem for \eqref{sde}.
It suffices to show that
$\mathbb E_x[M^\varphi_{t_1} G]=\mathbb E_x[M_{t_0}^\varphi G]$ for every $\mathcal B_{t_0}$-measurable $G \in C_b\big(C([0,T],\mathbb R^d)\big)$.
The task reduces to passing to the limit in $n$ in $\mathbb E^n_x[M^{\varphi,n}_{t_1} G]=\mathbb E^n_x[M_{t_0}^{\varphi,n}G],$ where 
$$
M^{\varphi,n}_t=\varphi(\omega_t)-\varphi(\omega_0) + \int_0^t (-\Delta\varphi + b_n \cdot \nabla \varphi)(s,\omega_s) ds.
$$
That is, we need to prove
\begin{equation}
\label{c}
\lim_{n_k}\mathbb E_x^{n_k}\int_0^t (b_{n_k} \cdot \nabla \varphi)(s,\omega_s) G(\omega)ds= \mathbb E_x\int_0^t (b \cdot \nabla \varphi)(s,\omega_s) G(\omega)ds
\end{equation}
This is done using the weak convergence \eqref{w} and  the next estimate.

\medskip

2. \eqref{id} with $\mathsf{h}:=b_{m_1}-b_{m_2} \in \mathbf{F}_{\sqrt{2}\delta}$, $f:=|\nabla \varphi|$:
\begin{align*}
& \left|\mathbb E^n_x\int_{t_0}^{t_1} \big|b_{m_1}(s,\omega_s)-b_{m_2}(s,\omega_s)\big||\nabla \varphi(s,\omega_s)|ds\right| \\
&\leq   
\sup_{z \in \mathbb Z^d}\biggl(\int_{t_0}^{t_1} \bigg\langle \big(\mathbf{1}_{\{|b_{m_1}-b_{m_2}| \geq 1\}} + \mathbf{1}_{\{|b_{m_1}-b_{m_2}| < 1\}}|b_{m_1}-b_{m_2}|^p|\nabla \varphi|^{p\theta'}\big)^{\theta'} \rho^2_z\bigg\rangle\biggr)^{\frac{1}{p\theta'}}.
\end{align*}
Without loss of generality, $|b-b_{n_k}| \rightarrow 0$ a.e.
Since $\varphi$ has compact support, the RHS converges to $0$ as $m_1$, $m_2 \rightarrow \infty$.
It follows from the weak convergence \eqref{w} and the standard monotone class argument that  
\begin{align*}
& \left|\mathbb E_x\int_{t_0}^{t_1} \big|b(s,\omega_s)-b_{m}(s,\omega_s)\big||\nabla \varphi(s,\omega_s)|ds\right| \\
&\leq   
\sup_{z \in \mathbb Z^d}\biggl(\int_{t_0}^{t_1} \bigg\langle \big(\mathbf{1}_{\{|b-b_{m}| \geq 1\}} + \mathbf{1}_{\{|b-b_{m}| < 1\}}|b-b_{m}|^p|\nabla \varphi|^{p\theta'}\big)^{\theta'} \rho^2_z\bigg\rangle\biggr)^{\frac{1}{p\theta'}},
\end{align*}
where the RHS converges to $0$ as $m \rightarrow \infty$. Now, we prove \eqref{c}:
\begin{align*}
&\left| \mathbb E_x^{n_k}\int_0^t (b_{n_k} \cdot \nabla \varphi)(s,\omega_s) G(\omega)ds - \mathbb E_x\int_0^t (b \cdot \nabla \varphi)(s,\omega_s) G(\omega)ds\right| \\
& \leq \left| \mathbb E_x^{n_k}\int_0^t |b_{n_k}-b_m| |\nabla \varphi|(s,\omega_s) |G(\omega)|ds \right| \\
& + \left| \mathbb E_x^{n_k}\int_0^t (b_{m} \cdot \nabla \varphi)(s,\omega_s) G(\omega)ds  - \mathbb E_x\int_0^t (b_{m} \cdot \nabla \varphi)(s,\omega_s) G(\omega)ds\right| \\
& + \left| \mathbb E_x\int_0^t |b_{m}-b| |\nabla \varphi|(s,\omega_s) |G(\omega)|ds \right|,
\end{align*}
where the first and the third terms in the RHS can be made arbitrarily small using the estimates above and the boundedness of $G$ by selecting $m$, and then $n_k$, sufficiently large. The second term can be made arbitrarily small in view of \eqref{w} by selecting $n_k$ even larger.
This ends the proof of Theorem \ref{thm1}.

\begin{remark}
Let $b \in \mathbf{F}_\delta$, $\delta<4$. Let $u_n$ be defined by
\begin{equation*}
(\partial_t - \Delta + b_n \cdot \nabla) u_n=0, \qquad u_n(0)=g \in C_b \cap L^1,
\end{equation*}
where $b_n$ are as above.

1.~For every $p>p_\delta$, the limit
\begin{equation}
\label{k_conv}
u:=s{\mbox-}L^p{\mbox-}\lim_{n} u_n  \quad \text{loc.\,uniformly in $t \geq 0$},
\end{equation}
exists and determines a unique weak solution (in $L^p$) to Cauchy problem
$(\partial_t-\Delta + b \cdot \nabla) u=0$, $u(0+)=g$. See \cite{S}.

2.~One can apply Moser's method in $L^p$, $p>p_\delta$, $p \geq 2$ to show H\"{o}lder continuity of the weak solution $u$. Combined with \eqref{k_conv}, this allows to conclude that $u_n \rightarrow u$ everywhere on $\mathbb R^d$. We plan to address these matters in detail elsewhere.
\end{remark}

\end{document}